\colorlet{cite}{LimeGreen!50!Green}
\tikzset{ 
  baseline=-2.3pt,
  text height=1.5ex, text depth=0.25ex,
  >=stealth,
  node distance=2cm,
  mid/.style={fill=white,inner sep=2.5pt},
}
\newtheoremstyle{mydef}
  {}		
  {}		
  {}		
  {}		
  {\scshape}	
  {. }		
  { }		
  {\thmname{#1}\thmnumber{ #2}\thmnote{ #3}}	
\theoremstyle{plain}	
\newtheorem{theorem}{Theorem} 
\newtheorem{proposition}[theorem]{Proposition}
\newtheorem{corollary}[theorem]{Corollary}
\theoremstyle{mydef} 
\theoremstyle{remark}
\newtheorem{example}[theorem]{Example}
\newtheorem{remark}{Remark}
\crefname{part}{Part}{Parts}
\crefname{chapter}{Chapter}{Chapters}
\crefname{section}{Section}{Sections}
\crefname{theorem}{Theorem}{Theorems}
\crefname{proposition}{Proposition}{Propositions}
\crefname{lemma}{Lemma}{Lemmata}
\crefname{corollary}{Corollary}{Corollaries}
\crefname{definition}{Definition}{Definitions}
\crefname{example}{Example}{Examples}
\crefname{remark}{Remark}{Remarks}
\crefname{notation}{Notation}{Notations}
\crefname{figure}{Figure}{Figures}
\crefname{enumi}{Item}{Items}
\newcommand{\abs}[1]{\left\lvert #1 \right\rvert}
\newcommand{\homog}{\text{hom}}
\DeclareMathOperator{\ch}{c}
\DeclareMathOperator{\ad}{ad}
\DeclareMathOperator{\Proj}{Proj}
\DeclareMathOperator{\id}{id}
\DeclareMathOperator{\diag}{Diag}
\DeclareMathOperator{\tr}{tr}
\DeclareMathOperator{\Diag}{Diag}
\author{Edoardo  Ballico, Brian  Callander, Elizabeth  Gasparim}
\address{E. Ballico - 
Dept. of Mathematics,
  Univ. of Trento,
38050 Povo (TN), Italy. 
 B. Callander - Imecc -
Unicamp, Depto. de Matem\'{a}tica.  Campinas - SP, Brasil.\qquad
E. Gasparim - Depto. de Matem\'aticas, Universidad Cat\'olica del Norte, Antofagasta, Chile.
E-mails: ballico@science.unitn.it, briancallander@gmail.com, etgasparim@gmail.com.}
\title{Compactifications of Adjoint Orbits and their Hodge Diamonds}
\thanks{E. G. was supported by a Simmons  Associateship Grant from ICTP, Italy. 
}
\begin{document}
\maketitle

\begin{abstract}
A recent theorem of \cite{GGSM1} showed that adjoint orbits of semisimple Lie algebras 
have the structure of symplectic Lefschetz fibrations. 
We investigate the behaviour of  their  fibrewise compactifications.
Expressing   adjoint orbits and  fibres as affine varieties 
in their Lie algebra, we compactify  them to projective 
varieties via  homogenisation of  the defining ideals.
We   find that 
 their Hodge diamonds   vary wildly according to the choice of homogenisation,
and that extensions of the potential to the compactification must acquire degenerate singularities. 
 \end{abstract}

\tableofcontents

\section{Hodge diamonds of Lefschetz fibrations}

Given a symplectic manifold $X$, a symplectic Lefschetz fibration  (SLF) on $X$ is a
surjection $f \colon X \to \mathbb C$ with only Morse type singularities, 
giving $X$ the structure of 
a  locally trivial fibration  
on the complement of the set of critical fibers, and  whose regular  fibres are
symplectic submanifolds of $X$, see  \cite{Se}.  
A large family of new examples of noncompact  SLFs was constructed in the recent paper \cite{GGSM1} and we
need to compactify these examples to obtain  information provided by
their Hodge diamonds (or simply the cohomological dimensions $h^p(\overline X, \Omega^q)$ of the compactification in the singular case).
Our motivation -- coming from mathematical physics -- is to eventually study categories of Lagrangian vanishing cycles.  
These play an essential role in
the Homological Mirror Symmetry conjecture \cite{Ko}, where such a
category appears as the Fukaya category of a Landau--Ginzburg (LG) model (that is, 
 a K\"ahler manifold $X$
equipped with a holomorphic function $f \colon X \to \mathbb C$ called the
superpotential). 
SLFs are nice examples of LG models where a precise definition of the
Fukaya category  of Lagrangian vanishing cycles is available, see \cite{fooo}, \cite{Se}.

  \cite{GGSM1} showed the existence of the structure of 
SLFs on adjoint orbits of semisimple Lie algebras.  These
adjoint orbits are not compact.  In fact, they are diffeomorphic to
cotangent bundles of   flag varieties \cite{GGSM2}.  
We want to compare the behaviour of vanishing cycles on $X$ and on its compactifications.
  Expressing the adjoint orbit as an algebraic variety, we
homogenise its ideal to obtain  a projective variety, which serves as our
compactification.
To study  such a compactification $\overline X$, we calculate its 
cohomological dimensions $h^p(\overline X, \Omega^q)$, as well as those of the compactified fibres of the SLF.
 Calculating such numbers is computationally 
heavy, so we used Macaulay2.
Details of the computational algorithms we used appear in \cite{CG}.
 In the smooth case, these dimensions give us the Hodge diamonds, from which we can read off topological data for the total space
$X$ as well as for the fibres of the SLF.

\begin{remark}
Choosing a compactification is in general a delicate task:  a
different choice of generators for the defining ideal of the orbit can result in completely
different cohomologies  of the corresponding compactification. This happens because the 
homogenisation of an ideal $I$ can change drastically  if we vary the choice
of generators for $I$ (see \cref{reg1}). 
\end{remark}

In \cref{sec:mainTheorem}, we present the principal theorem that furnishes us
with examples.  In \cref{sec:egsl2c}, we find all adjoint orbits of
$\mathfrak{sl}(2,\mathbb C)$ (up to isomorphism), and apply our
compactification process to this simple case. In \cref{sec:2m1m1}, we consider
a more involved example of an adjoint orbit inside $\mathfrak{sl}(3,\mathbb
C)$, corresponding to the minimal flag variety, and show that any extension of 
the potential to the compactified orbit must acquire degenerate singularities, 
hence it would no longer remain a Lefschetz
fibration.  This is
generalised in \cref{sec:generalisation} to the minimal flag variety of
$\mathfrak{sl}(n+1,\mathbb C)$.  We illustrate with an example in
\cref{sec:singular} just how delicate a task compactification can be.
\vspace{3mm}

Acknowledgements:   We are grateful to Koushik Ray and Pushan Majumdar of the Department of Theoretical
  Physics, Indian Association For The Cultivation of Science, Kolkata, for
  running  our large memory (48 GB)  computations on their servers, and to 
  Daniel Grayson for the time  spent
  assisting us with technical issues of Macaulay2. 
  
\section{Lefschetz fibrations on adjoint orbits}
\label{sec:mainTheorem}

Let $H_0$ be an element in the Cartan subalgebra of a semisimple Lie algebra $\mathfrak g$, let 
$\mathcal O(H_0)$ denote its adjoint orbit and $\langle \cdot, \cdot \rangle$ the Cartan-Killing form.
It is proved in \cite{GGSM1} that for each regular element $H \in \mathfrak g$, the function 
$f_H\colon \mathcal O(H_0)\rightarrow \mathbb C $ given by $f_H(x) = \langle
H,x \rangle$ gives the
orbit the structure of a symplectic Lefschetz fibration.
This includes the following properties for $f_H$:
\begin{enumerate}
\item The singularities are 
 nondegenerate.

\item If $c_{1},c_{2}\in \mathbb{C}$ are regular values, then the level
manifolds $f_{H}^{-1}\left( c_{1}\right) $ and $f_{H}^{-1}\left(
c_{2}\right) $ are diffeomorphic.

\item There exists a symplectic form $\Omega $ in $\mathcal{O}\left(
H_{0}\right) $ such that if $c\in \mathbb{C}$ is a regular value then the
level manifold $f_{H}^{-1}\left( c\right) $ is symplectic; that is, the
restriction of $\Omega $ to $f_{H}^{-1}\left( c\right) $ is a symplectic
(nondegenerate) form.

\item If $c\in \mathbb{C}$ is a singular value, then $f_{H}^{-1}\left(
c\right) $ is a union of  affine subspaces (contained in $\mathcal{O}\left(
H_{0}\right) $). These subspaces are symplectic with respect to the form \, $%
\Omega $ from the previous item.
\end{enumerate}

We compactify the orbit by projectivisation; that is, we homogenise the
polynomials with an extra variable $t$
to obtain a projective variety.

\section{Compactification of the orbit of $\mathfrak{sl}(2,\mathbb C)$} 
\label[section]{sec:egsl2c}

Inside $\mathfrak{sl}(2,\mathbb C)$, all adjoint orbits are of the same
isomorphism type, which we now describe as an SLF with $2$ critical values.
In $\mathfrak{sl}(2,\mathbb C)$, take
\[
  H
  = 
  H_0
  =
  \begin{pmatrix}
    1 & 0 \\
    0 & -1
  \end{pmatrix},
\]
which is regular since it has $2$ distinct eigenvalues.  The orbit $\mathcal O
(H_0)$ is the set of matrices in $\mathfrak{sl}(2,\mathbb C)$ with eigenvalues
$1$ and $-1$, which forms a submanifold of complex dimension $2$ of
$\mathfrak{sl}(2, \mathbb C)$. 

The Weyl group $\mathcal W
\simeq S_2$ acts via conjugation by permutation matrices.  The two
singularities are thus $H$ and $-H$.  

We can also express the orbit as an affine variety embedded in $\mathbb C^3$.
Writing a
general element $A \in \mathcal O (H_0)$ as
\[
  A
  =
  \begin{pmatrix}
    x & y \\
    z & -x
  \end{pmatrix},
\]
the characteristic polynomial of $A$ is
\[
  -\left( x - \lambda \right) \left( x + \lambda \right) - yz
  =
  \det \left( A - \lambda \id \right)
  =
  \lambda^2 - 1,
\]
the first equality being derived from explicit calculation and the second
due to the fact that $\tr A = 0$ and $\det A = -1$.  This in turn implies that
the orbit $\mathcal O (H_0) \subset \mathfrak{sl} (2, \mathbb C) \simeq
\mathbb C^3$ is an affine variety $X$ cut out by the equation
\begin{equation}
  \label[equation]{eq:sl2orbit}
  x^2 + yz - 1 = 0.
\end{equation}
We can compactify this variety by homogenising eq.~\ref{eq:sl2orbit} and embedding
$X$ into the corresponding projective variety. This gives the surface cut out by  
$x^2+yz-t^2=0$ in $\mathbb P^3$.
The Hodge
diamond of this compactification is shown in figure \ref{fig:sl2hodge}.
\begin{figure}[ htb ]
  $\begin{array}{ccccc}
    && 1 \cr
    & 0 && 0 \cr
    0 && 2 && 0 \cr
    & 0 && 0 \cr
    && 1 \cr
  \end{array}$
  \caption{The Hodge diamond of the projectivisation of $\mathcal O (
  \Diag(1,-1))$.}
  \label[figure]{fig:sl2hodge}
\end{figure}

The height function is
\[
  f_H (A)
  =
  \tr HA
  =
  \tr
  \begin{pmatrix}
    1 & 0 \\
    0 & -1
  \end{pmatrix}
  \begin{pmatrix}
    x & y \\
    z & -x
  \end{pmatrix}
  =
  2x.
\]
Note that the two critical points belong to distinct fibres.  We can also
express the regular fibre (over zero) as the affine variety in
$\set{(y,z) \in \mathbb C^2}$ cut out by the equation
\[
  yz-1 = 0
\]
since it must satisfy eq.~\ref{eq:sl2orbit} and $x = 0$.  As with the orbit,
we  homogenise this equation and embed the fibre into the corresponding
projective variety cut out by the equations $x=0$ and $yz-t^2=0$ in $ \mathbb P^3$.  This yields the Hodge diamond shown in
fig.~\ref{fig:sl2rfibre}.
\begin{figure}[ hbt ]
  $\begin{array}{ccc}
    & 1 \cr
    0 && 0 \cr
    & 1 \cr
  \end{array}$
  \caption{The Hodge diamond of the projectivisation of the regular fibre over
  zero, where $H=H_0 = \Diag(1,-1)$.}
  \label[figure]{fig:sl2rfibre}
\end{figure}
Note that these compactified fibres have no middle homology.

\section{Smooth compactification of an $ \mathfrak{sl}(3,\mathbb C)$ orbit} 
\label{sec:2m1m1}

The adjoint orbits of $\mathfrak{sl}(3, \mathbb C)$  fall into one of
three isomorphisms types.
Here we present an SLF with 3 critical values. In $\mathfrak{sl} (3, \mathbb C)$, consider the orbit $\mathcal O (H_0)$ of
\[
  H_0 = 
  \begin{pmatrix}
    2 & 0 & 0 \\
    0 & -1 & 0 \\
    0 & 0 & -1
  \end{pmatrix}
\]
under the adjoint action.   We fix the
element
\[
  H = 
  \begin{pmatrix}
    1 & 0 & 0 \\
    0 & -1 & 0 \\
    0 & 0 & 0
  \end{pmatrix}
\]
to define the potential $f_H$.  
%
  A general element
$A \in \mathfrak{sl} \left( 3, \mathbb{C} \right)$ has the form
\begin{equation}\label{gen3}A=\left(\begin{matrix}
             x_1 &  y_1&  y_2\\
             z_1 &  x_2 &  y_3\\
             z_2 &  z_3 &  -x_1 - x_2
  \end{matrix}\right)\text{.}
  \end{equation}
In this example, the adjoint orbit $\mathcal{O} (H_0)$ consists of all the
matrices with the minimal polynomial 
\begin{equation}
  \label{eq:gens}
  (A + \id)(A - 2\id).
\end{equation}
So, 
the orbit is the affine variety cut out by the 
ideal $I$ generated by the polynomial entries of $(A + \id)(A - 2\id)$.
To obtain a projectivisation of $X$, we first homogenise its ideal $I$ with
respect to a new variable $t$, then
take the corresponding projective variety.  
In this case, the  projective variety
$\overline{X}$ is a smooth compactification of $X$. 
We used Macaulay2 \cite{M2} to calculate the Hodge diamonds of
a compactification of the adjoint orbit $\mathcal O (H_0)$, 
obtaining:
\[
  \begin{array}{ccccccccc}
    &&&& 1 &&&& \\
    &&& 0 && 0 &&& \\
    && 0 && 2 && 0 && \\
    & 0 && 0 && 0 && 0 &  \\
    0 && 0 && 3 && 0 && 0 \\
    & 0 && 0 && 0 && 0 & \\
    && 0 && 2 && 0 && \\
    &&& 0 && 0 &&& \\
    &&&& 1 &&&& 
  \end{array} \text{.}
\]

 We now calculate the Hodge diamond of a compactified regular fibre. 
The potential corresponding to our choice of $H$ is 
 $f_H = x_1 - x_2$.
The critical values of this potential are $\pm 3$ and $0$.
Since all regular fibres of an SLF are isomorphic, it suffices to chose the
regular value $1$.  We then define the regular fibre $X_1$  as the
variety in $\mathfrak {sl} (3, \mathbb C) \cong \mathbb C^8$ corresponding to
the ideal $J$ obtained by summing $I$ with the ideal generated by $f_H-1$. 
We then homogenise $J$ to obtain a projectivisation $\overline{X}_1$ of the
regular fibre $X_1$. The Hodge diamond of $\overline{X}_1$ is:
\[
  \begin{array}{ccccccc}
    &&& 1 &&& \\
    && 0 && 0 && \\
    & 0 && 2 && 0 & \\
    0 && 0 && 0 && 0  \\
    & 0 && 2 && 0 & \\
    && 0 && 0 && \\
    &&& 1 &&& 
  \end{array}\text{.}
\]

\begin{remark}
We used the same method to calculate the Hodge diamonds for the singular fibre over $0$ 
and obtained the same Hodge diamond as for the regular fibres.
\end{remark}

\begin{remark}
  More details of this example 
appear 
  in \cite{brianThesis}.
\end{remark}

\section{Generalisations   and computational corollaries}
\label[section]{sec:generalisation}

We generalise our example of $\mathfrak{sl} (3, \mathbb C)$ to
$\mathfrak{sl} (n+1, \mathbb C)$.  To obtain the case where the adjoint orbit is 
diffeomorphic to the cotangent bundle of  the minimal flag, we set
$H_0 = \diag (n, -1, \dotsc, -1)$ and $H = \diag (1,-1,0, \dotsc, 0)$.  Then 
the diffeomorphism type of the adjoint orbit is given by $\mathcal O
(H_0) \simeq
 T^* \mathbb P^{n}$ (see \cite[sec. 2.2]{GGSM2}), and $H$ gives the potential $x_1 - x_2$
as before.  If we  compactify this orbit to $\mathbb P^{n} \times
{\mathbb P^{n}}^{\ast}$ (this may be done holomorphically by \cite[Sec. 4.2]{GGSM2}), then the Hodge classes of the compactification  are
given by 
\begin{equation}\label{diam}h^{p,p} = n+1 - \abs{n-p}
\end{equation}
 and the
remaining  Hodge numbers are $0$.  An application of the Lefschetz hyperplane
theorem determines all but the Hodge numbers of the middle row of the compactification of
the regular fibre, and computations shows the latter are zero.

\begin{remark}\label{bundle} 
  We observe that there are various ways to look at the isomorphism type of the adjoint orbit 
$\mathcal O(H_0)$ depending on the point of view  best suited to a given problem.

 Firstly,
the adjoint orbit $\mathrm{Ad}\left( G\right) H_{0}$ can be identified with the homogeneous
space $G/Z_{H_{0}}$ where $Z_{H_0}$ is the centralizer of $H_0$.  
The compact subgroup $K$ of $G$ cuts out 
 the subadjoint orbit $\mathrm{Ad}\left( K\right) H_{0}$, which can be identified  with the flag manifold $\mathbb{F}_{H_{0}}=G/P_{H_{0}}$
where $P_{H_{0}}$ is the parabolic subgroup associated to $H_{0}$. 
In \cite{GGSM1} the symplectic structure on $\mathrm{Ad}\left( G\right) H_{0}$ is chosen as  the imaginary part 
of the Hermitian form inherited from $\mathfrak g$. With this choice, the flag $\mathbb{F}_{H_{0}}$ is the Lagrangian in $\mathcal O(H_0)\simeq 
T^*\mathbb{F}_{H_{0}}$  corresponding to the zero section of the cotangent bundle. From a Riemannian point of view this is also diffeomorphic to $T\mathbb{F}_{H_{0}}$.

Secondly,  $\mathcal O(H_0)$ can be identified with the open orbit of the diagonal action of $G$ on the product 
$\mathbb{F}_{H_{0}}\times \mathbb{F}_{H_{0}^*}$ \cite[sec. 4.2]{GGSM2}. 
A vector bundle structure on $\mathcal O(H_0)$ is obtained by observing that
\begin{equation*}
\mathcal{O}(H_{0})=\mathrm{Ad}(G)H_0=\mathrm{Ad}(K)(H_0+\mathfrak{n}^+)=\bigcup_{k\in K}\mathrm{Ad}(k)(H_0+\mathfrak{n}^+),
\end{equation*}
where $\mathfrak n^+$ is the sum of the eigenspaces of $\ad (H_0)$ associated to its positive eigenvalues.
The process of projectivisation  then transforms the affine  space $H_0 + \mathfrak n^+$ into a projective space of the same dimension as the flag. 
\end{remark}

\begin{example}
  \label{eg:fibreProj}
  Let $\Sigma = \Set{\alpha_{12}, \alpha_{23}}$ be the usual choice of simple roots for $\mathfrak{sl} (3,\mathbb C)$. 
  In the case $H_0 = \diag(2,-1,-1)$, the corresponding positive nilpotent part is $\mathfrak n^+ = \mathfrak g_{\alpha_{12}} \oplus \mathfrak g_{\alpha_{13}}$, which consists of matrices of the form:
  \[
    \begin{pmatrix}
      0 & y_1 & y_2 \\
      0 & 0 & 0 \\
      0 & 0 & 0 
    \end{pmatrix}.
  \]
  With the description of $\mathcal O (H_0)$ as a vector bundle above, the closure of the fibre $H_0 + \mathfrak n^+$ inside the compactification of section~\ref{sec:2m1m1} consists of matrices of the form:
  \[
    A
    =
    \begin{bmatrix}
      2t & y_1 & y_2 \\
      0  & -t  & 0 \\
      0  & 0   & -t
    \end{bmatrix}.
  \]
  Two matrices of this form are equivalent if one is a scalar multiple of the other and \emph{a priori} one might expect there to be further relations between the matrices.
  However, it can be verified by inspecting the generators of the defining ideal that there are no further relations.
  Therefore, we can embed $H_0 + \mathfrak n^+$ into $\mathbb P^2$ by mapping $A$ to $[t,y_1,y_2]$.
  The case of $\mathfrak{sl}(n, \mathbb C)$ is similar, with a map into $\mathbb P^n$ given by $[t,y_1,\dots, y_n]$.
\end{example}

\begin{proposition} Let $H_0 = \diag (n, -1, \dotsc, -1)$.
Then the adjoint orbit of $H_0$ in  $\mathfrak{sl}(n+1,\mathbb C)$ compactifies holomorphically to a trivial product.
\end{proposition}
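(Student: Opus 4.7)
My plan is to realise $\mathcal{O}(H_0)$ explicitly as the complement of an incidence divisor in $\mathbb{P}^n \times (\mathbb{P}^n)^{\ast}$, so that its closure is trivially the whole product. The identification is already quoted in \cref{bundle} from \cite{GGSM2}; the point of the proof is to make it concrete enough that smoothness of the compactification and the trivial product structure are both transparent.

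First I would parametrise the orbit by rank-one factorisations. Every $A \in \mathcal{O}(H_0)$ has minimal polynomial $(A - n\id)(A + \id)$, so $A + \id$ has an $n$-dimensional kernel and hence rank one. Writing $A + \id = v\ell^{T}$ with $v \in \mathbb{C}^{n+1} \setminus \{0\}$ and $\ell \in (\mathbb{C}^{n+1})^{\ast} \setminus \{0\}$, the condition $\tr A = 0$ is equivalent to $\ell v = n+1$, and a direct expansion using $(v\ell^{T})^{2} = (\ell v)\, v\ell^{T}$ shows conversely that any such pair yields a matrix with the desired minimal polynomial. The rescaling $(v, \ell) \mapsto (\lambda v, \lambda^{-1} \ell)$ preserves $A$, so $A \mapsto ([v], [\ell])$ descends to a holomorphic map $\mathcal{O}(H_0) \to \mathbb{P}^n \times (\mathbb{P}^n)^{\ast}$. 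Its image is precisely the open set $\{\ell v \neq 0\}$, because any nonzero value of $\ell v$ can be rescaled to $n+1$ within a $(\lambda, \lambda^{-1})$-orbit, and the map is a biholomorphism onto this open complement of the universal incidence divisor.

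Taking Zariski closure inside $\mathbb{P}^n \times (\mathbb{P}^n)^{\ast}$ then recovers the whole product, giving a smooth holomorphic compactification of $\mathcal{O}(H_0)$ which is manifestly a trivial product of projective spaces. The main obstacle is not the construction itself but verifying that this compactification coincides with the ideal-homogenisation compactification used elsewhere in the paper, so that \cref{diam} really is the Hodge diamond of $\mathbb{P}^n \times (\mathbb{P}^n)^{\ast}$; this reduces to a K\"unneth check giving $h^{p,p} = n+1 - \abs{n-p}$ and otherwise zero, matching the explicit computer calculation in \cref{sec:2m1m1}.
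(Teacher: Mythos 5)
Your proof is correct, but it takes a genuinely different route from the paper's. The paper quotes \cite[Thm.~5.11]{GGSM2} to embed $\mathcal O(H_0)$ as the open orbit of the diagonal $G$-action on $\mathbb F_{H_0}\times\mathbb F_{H_0^*}\simeq\mathbb P^n\times{\mathbb P^n}^*$, and then argues via the orbit structure of that action: the orbits are of the form $G\cdot(b_0,wb_0^*)$ with $w\in\mathcal W$, and for this $H_0$ there are only two of them, the open one (the adjoint orbit) and a closed one (identified with the orbit of the minimal nilpotent $N$, exhibited in the closure by an explicit $t\to\infty$ limit). Your argument instead builds the identification by hand from the rank-one factorisation $A+\id=v\ell^{T}$, which is available precisely because the minimal polynomial is $(A-n\id)(A+\id)$; the trace normalisation $\ell v=n+1$ and the $(\lambda,\lambda^{-1})$-rescaling give a biholomorphism onto the complement of the incidence divisor $\{\ell v=0\}$, whence the closure is all of $\mathbb P^n\times{\mathbb P^n}^*$. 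What your approach buys is self-containedness and an explicit description of the boundary as the incidence divisor (the projectivised minimal nilpotent cone), with no appeal to the Weyl-group orbit count; what the paper's approach buys is generality, since the double-coset description of the diagonal orbits applies to other flag types beyond the minimal one. Two small remarks: your closing worry about matching the homogenisation compactification is not needed for the proposition as stated, which only asserts that \emph{a} holomorphic compactification to a trivial product exists (and indeed the paper's equation for $h^{p,p}$ is stated for the $\mathbb P^n\times{\mathbb P^n}^*$ model, not for the homogenisation); and for completeness you should record the one-line check that $A=v\ell^T-\id$ with $\ell v=n+1$ is genuinely semisimple with the right eigenvalue multiplicities (immediate since $v\ell^T$ has rank one and nonzero trace), which you essentially do via $(v\ell^T)^2=(\ell v)v\ell^T$.
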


\begin{proof}
  For the case  $H_0 = \diag (n, -1, \dotsc, -1)$,
   \cite[Thm.~5.11]{GGSM2} showed that  $\mathcal O(H_0)$
can be embedded holomorphically   into $\overline{X}:=\mathbb P^{n}\times {\mathbb P^{n}}^{\ast}$ as the open orbit of the diagonal action of $G$ on 
$\mathbb{F}_{H_{0}}\times \mathbb{F}_{H_{0}^*} \simeq \mathbb P^{n}\times {\mathbb P^{n}}^{\ast} .$
We claim that the complement of the image of $\mathcal O(H_0)$ in the compactification $\overline{X}= \mathbb P^{n}\times {\mathbb P^{n}}^{\ast}$ 
can be identified with the adjoint orbit of the nilpotent matrix
  $$N=
    \begin{pmatrix}
      0 & 1 & \dots & 0  \\
      0  & 0 &    & 0 \\
      \vdots & & \ddots \\
      0  &    & \dots & 0 
    \end{pmatrix}.
  $$
 Since $G$ acts by conjugation, it is clear that if $N$ belongs to  $\overline{X}$ so does its entire adjoint orbit $\mathcal O(N)$.
 Notice that $\mathcal O(H_0)$ contains all matrices of the form 
  $$N=
    \begin{pmatrix}
      n & t  & \dots & 0  \\
      0  & -1 &    & 0 \\
      \vdots & & \ddots \\
      0  &    & \dots & -1 
    \end{pmatrix},
  $$
this can be verified by direct calculation. Now, dividing by $t$ and taking limit when $t \mapsto \infty$ shows that $N$ belongs to $\overline{X}$
(in fact, this argument  shows that any manifold that serves as a compactification of $\mathcal O(X_0)$  contains a copy of  $N$).
It remains to prove that $\mathbb P^{n}\times {\mathbb P^{n}}^{\ast}\setminus  \mathcal O(X_0)\simeq \mathcal O(N)$.
Consider the diagonal action of $SL(n, \mathbb C)$ in $\mathbb P^{n}\times {\mathbb P^{n}}^{\ast}$ by $g(x,y) = (gx,gy)$.
The orbits of this action on a product of a flag an its dual $\mathbb{F}_{H_{0}}\times \mathbb{F}_{H_{0}^*}$ are in general of the form $G \cdot (b_0, w b_0^*)$ 
where $b_0$ and $b_0^*$ are the origins of  $\mathbb{F}_{H_{0}}$ and  $\mathbb{F}_{H_{0}^*}$ respectively and $w \in \mathcal W$. In the particular case considered here, for 
$H_0= (n, -1, \dots, -1)$,  $\{(b_0,wb_0^*), w \in  \mathcal W\}$ has only 2 elements, and as a consequence, the diagonal action of $G$ on $\mathbb{F}_{H_{0}}\times \mathbb{F}_{H_{0}^*}$ has only 2 orbits, the open one isomorphic to $\mathcal O(H_0)$,  and the closed one isomorphic to $\mathcal O(N)$.
\end{proof}

\begin{remark}
  As mentioned in Remark~\ref{bundle}, under the real diffeomorphism, the flag  $\mathbb F_{H_0}$ corresponds to the zero section of the vector bundle $\mathcal O (H_0)$
 and consequently is Lagrangian in the orbit.  
  This flag remains Lagrangian when embedded into  the product $\mathbb F \times \mathbb F^*$ as the anti-diagonal.
  Therefore, by Weinstein's theorem,  it has  a neighbourhood which is symplectomorphic to the cotangent bundle $T^* \mathbb F$.
  However,  via the equivariant  real diffeomorphism  $\iota \colon \mathcal O(H_0) \rightarrow T^*\mathbb F$
  exhibited in  \cite[Thm.~2.1]{GGSM2}  the canonical symplectic form on the cotangent bundle pulls back to the
  Kostant--Kirillov--Souriau (KKS) form  on the  adjoint orbit $ \mathcal O (H_0)$, thus the real diffeomorphism can not be made holomorphic.
  \end{remark}

The following  corollary follows immediately from observing the Hodge diamonds we obtained.

\begin{corollary} 
An extension of the potential $f_H$ to the compactification  $\mathbb P^n\times {\mathbb P^n}^{\ast}$
cannot be of Morse type; that is, it must have degenerate singularities.
\end{corollary}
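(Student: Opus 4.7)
The approach is to compare Euler characteristics. Suppose for contradiction that $f_H$ extended to a holomorphic map $\bar f \colon \overline X := \mathbb P^n \times {\mathbb P^n}^{\ast} \to \mathbb P^1$ whose only singularities are nondegenerate (Morse), so that $\bar f$ is a Lefschetz fibration. Writing $\overline F$ for the compactified regular fibre and $k \geq 0$ for the number of critical points, the standard Euler characteristic formula for a Lefschetz fibration over $\mathbb P^1$ gives
\[
\chi(\overline X) \;=\; 2\chi(\overline F) + (-1)^{d+1}\,k,
\]
where $d = \dim_{\mathbb C} \overline F = 2n-1$. Since $d$ is odd, the sign is $+1$ and the identity reduces to $\chi(\overline X) = 2\chi(\overline F) + k$. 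To produce a contradiction it will suffice to show that $2\chi(\overline F) > \chi(\overline X)$.

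Both Euler characteristics can be extracted from the Hodge-theoretic data already assembled in the excerpt. From equation~(\ref{diam}), only the diagonal Hodge numbers of $\overline X$ are nonzero, and
\[
\chi(\overline X) \;=\; \sum_{p=0}^{2n}\bigl(n+1-|n-p|\bigr) \;=\; (n+1)^{2}.
\]
For $\overline F$, the Lefschetz hyperplane theorem alluded to in the preceding paragraph of the excerpt, combined with Poincaré duality on $\overline F$, transports the diagonal Hodge numbers of $\overline X$ to the diagonal of $\overline F$ for every row except the middle one, which vanishes by the quoted computation. The resulting diagonal sequence $h^{p,p}(\overline F)$ for $p=0,\dots,2n-1$ is $1,2,\dots,n,n,\dots,2,1$, whence $\chi(\overline F) = n(n+1)$.

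Substituting into the Euler characteristic identity yields
\[
(n+1)^{2} \;=\; 2n(n+1) + k, \qquad\text{i.e.}\qquad k \;=\; -(n-1)(n+1),
\]
which is strictly negative for $n\geq 2$, contradicting $k\geq 0$. For $n=1$ the same identity forces $k=0$, which fails because $f_H$ already has at least two critical points inside $\mathcal O(H_0)$ that any extension must inherit. In either case the assumed Morse extension cannot exist.

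The main obstacle is to pin down exactly what class of extensions the statement contemplates; the argument above is calibrated for holomorphic maps to $\mathbb P^1$, the natural meaning in the Landau--Ginzburg context used throughout the paper (a holomorphic function $\overline X \to \mathbb C$ would be constant on the compact $\overline X$, so the interesting case is indeed a map to $\mathbb P^1$). A subsidiary point is to justify carefully the Lefschetz hyperplane comparison asserted for $\overline F$ inside $\mathbb P^n \times {\mathbb P^n}^{\ast}$; once this is granted, the Hodge-number bookkeeping and the Lefschetz fibration Euler characteristic formula close the argument.
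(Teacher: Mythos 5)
Your argument is correct in substance but follows a genuinely different route from the paper's. The paper's proof is qualitative and local: it observes that the middle row of the Hodge diamond of the compactified regular fibre is identically zero, so an extension of the fibration admits no vanishing cycles, and then invokes Picard--Lefschetz theory to conclude that a Lefschetz fibration with critical points but no vanishing cycles cannot exist (the potential certainly has critical points, at the Weyl translates $wH_0$). You instead run a global Euler-characteristic count: using the diagonal Hodge numbers $h^{p,p}(\overline X)=n+1-\abs{n-p}$ you get $\chi(\overline X)=(n+1)^2$, the Lefschetz hyperplane theorem plus Poincar\'e duality and the vanishing of the middle row give $\chi(\overline F)=n(n+1)$, and the formula $\chi(\overline X)=2\chi(\overline F)+(-1)^{d+1}k$ with $d=2n-1$ forces $k=-(n-1)(n+1)\le 0$, which is absurd for $n\ge 2$ and is ruled out for $n=1$ by the existing critical points. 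Your computations check out (note $k\ge n+1$ in fact, since the distinct Weyl translates of $H_0$ number $n+1$, which also kills the $n=1$ case). What your version buys is robustness against the worry that a vanishing cycle might be null-homologous, a point the paper's one-line appeal to Picard--Lefschetz glosses over; what it costs is that you need the full Hodge diamonds of both $\overline X$ and $\overline F$, and you need the extension to be an honest morphism $\overline X\to\mathbb P^1$ with no indeterminacy locus and with Morse-type fibres over \emph{every} point of $\mathbb P^1$ including $\infty$ (a base locus or a bad fibre at infinity would invalidate the formula $\chi(\overline X)=2\chi(\overline F)+(-1)^{d+1}k$). That caveat is no worse than the implicit assumptions in the paper's own proof, but it is the one place where your argument should be stated as a hypothesis rather than left to the closing discussion.
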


\begin{proof} Our potential has singularities at $wH_0, w\in \mathcal W$.  Now
observe that the Hodge diamond of our compactified regular fibres have only zeroes in the middle row, 
hence any extension of the  fibration  to the compactification will have no vanishing cycles. 
However, the existence of a Lefschetz fibration  with singularities and without vanishing cycles  
is precluded by the fundamental theorem of Picard--Lefschetz theory.
\end{proof}

\section{Singular compactifications of  $\mathfrak{sl}(3, \mathbb C)$ orbits}
\label{sec:singular}

We show that the  compactified regular fibre for $f_H$ can change drastically 
according to the choice of homogenisation of the ideal cutting out the orbit as an affine variety. 
The compactifications obtained in this section turn out to be singular. Nevertheless, we wish to
depict diamonds with their sheaf cohomological information. It is well known, see e.g. \cite{105} 
that every complex  algebraic variety has a mixed Hodge structure. We do not attempt to 
describe  mixed Hodge  structures, instead we calculate the numbers $h^p(\overline{X}, \Omega^q)$,
where $\Omega$ is the cotangent sheaf. 
Although  we do not explore here how the diamond containing such numbers might be related to 
the topology of $\overline{X}$, such diamonds do provide us with enough information to show 
that 2 natural choices of compactification differ.

\subsection{A fibration with 4 critical values}
\label{reg1}
In $\mathfrak{sl}\left( 3,\mathbb{C}\right) $ we take 
\[
H=H_{0}=\left( 
\begin{array}{ccc}
1 & 0 & 0 \\
0 & -1 & 0 \\ 
0 & 0 &  0%
\end{array}%
\right) ,
\]%
which  is regular since it has 3 distinct eigenvalues.
Then $X= \mathcal{O}\left( H_{0}\right) $ is the set of 
matrices in  $\mathfrak{sl}\left( 3,\mathbb{C}\right) $ with eigenvalues $1,0,-1$. 
This set forms a submanifold of real dimension $6$ (a complex threefold).

In this  case $\mathcal{W}\simeq S_3$, the permutation group in 3 elements, and acts via
conjugation by permutation matrices. Therefore, the potential 
$f_H=x_1-x_2$ has 6 singularities;
namely, the 6 diagonal matrices with diagonal entries $1,0,-1$.
The four singular values of $f_H$ are $\pm 1, \pm2$.
  Thus, $0$ is a regular value for $f_H$.
Let 
$A \in \mathfrak{sl}(3,\mathbb C)$ be a general element written as in (\ref{gen3}), 
and let $p= \det(A)$, $q=\det(A-\id)$. The ideals $\langle p,q\rangle $ and $\langle p-q,q\rangle $ are clearly 
identical and either of them defines the orbit though $H_0$ as an affine variety in $ \mathfrak{sl}\left( 3,\mathbb{C}\right) $.
Now
$$I = \langle p,q,f_H\rangle \qquad J=\langle p,p-q,f_H\rangle $$ 
are two  identical ideals cutting out the regular fibre $X_0$ over $0$. Let
$I_{\homog}$ and $J_{\homog}$ be the respective saturated  homogenisations and 
notice that $I_{\hom}\neq J_{\hom}$, so that  they define distinct projective
varieties, and thus two distinct compactifications 
\begin{align*}
  \overline X_0^I &= \Proj (\mathbb C[ x_1,x_2,y_1,y_2,y_3,z_1,z_2,z_3,t]/I_{\hom}) 
  \quad \text{and}
  \\
  \overline X_0^J &= \Proj(\mathbb C[x_1,x_2,y_1,y_2,y_3,z_1,z_2,z_3,t]/J_{\hom}) 
\end{align*}
of $X_0$.
Their  diamonds  are given in figure \ref{fig:110rcomparison}.
Remark \ref{?} explains the computational issues. 

\begin{remark}
  The variety $\overline X_0^J$ is an irreducible component of $\overline X_0^I$.
  Indeed, we find that $I \subset J$ and that $J$ is a prime ideal (whereas $I$  is not), thus the variety $\overline X_0^J$ is irreducible and contained in $\overline X_0^I$.
  
\end{remark}

\begin{figure}[htp]
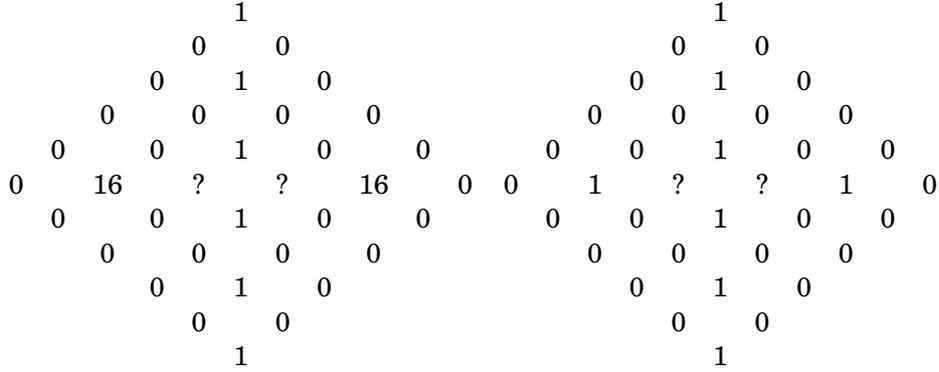

  \begin{subfigure}[b]{0.45\textwidth}
    $\begin{array}{ccccccccccc}
      &&&&& 1 \cr
      &&&& 0 && 0 \cr
      &&& 0 && 1 && 0  \cr
      && 0 && 0 && 0 && 0  \cr
      & 0 && 0 && 1 && 0 && 0  \cr
       0 && 16 && ? && ? && 16 && 0 \cr
      & 0 && 0 && 1 && 0 && 0  \cr
      && 0 && 0 && 0 && 0  \cr
      &&& 0 && 1 && 0  \cr
      &&&& 0 && 0  \cr
      &&&&& 1 \cr
    \end{array}$
    \label{fig:110ur}
  \end{subfigure}
  \qquad
  \begin{subfigure}[b]{0.45\textwidth}
    $\begin{array}{ccccccccccc}
      &&&&& 1 &&&&&    \\
      &&&& 0 && 0 &&&&    \\
      &&& 0 && 1 && 0 &&&    \\
      && 0 && 0 && 0 && 0 &&    \\
      & 0 && 0 && 1 && 0 && 0 &    \\
       0 & & 1 && ? && ? && 1 && 0    \\
      & 0 && 0 && 1 && 0 && 0 &    \\
      && 0 && 0 && 0 && 0 &&    \\
      &&& 0 && 1 && 0 &&&    \\
      &&&& 0 && 0 &&&&    \\
      &&&&& 1 &&&&&    
    \end{array}$
    \label{fig:110sr}
  \end{subfigure}
  \caption{The  diamonds of two projectivisations $\overline X_0^I$ (left) and $\overline X_0^J$ (right) of  the regular fibre
  corresponding to $H=H_0=\Diag(1,-1,0)$.}
  \label{fig:110rcomparison}
\end{figure}

\begin{remark} 
  [Computational matters]\label{?}
  Macaulay2 greatly facilitates cohomological calculations that
  are unfeasible by hand. The Macaulay2 algorithm that computes 
  $h^p(X, \Omega_X^q)$ is written for a smooth variety $X$. However, the algorithm proceeds by 
  resolving the cotangent sheaf and calculating its exterior powers to compute sheaf cohomology, all of which works 
  out reasonably  well  for our singular examples.
  The only drawback is that the memory requirements rise steeply with the
  dimension of the variety -- especially for the  classes $h^{p,p}$.  
  In fact, the unknown entries in our  diamonds (marked with a `?')
  exhausted the
  48GB of RAM of the computers of our collaborators at IACS 
   without producing an answer.
\end{remark}

\subsubsection{Expected Euler characteristic}
To reassure ourselves about the much larger values occurring  for the diamond
of  $\overline X_0^I$ in comparison to  $\overline X_0^J$, we perform the rather amusing calculation of the 
expected Euler characteristic of both varieties, which  give out quite surprising numbers.

\begin{remark} Let $Y = Y_1\cap \dots \cap Y_r$ be a complete intersection. If $Y$ is smooth, then 
the Euler characteristic of $Y$ is uniquely determined by its cohomology class. However, for a singular variety this is 
no longer true, and the cohomological classes $Y_i$ do not determine the topological Euler characteristic. They determine only 
what is called the expected Euler characteristic of $Y$ (equal to the Fulton--Johnson class), see \cite{Cy}. 
\end{remark}

To calculate the expected Euler characteristic we use the following basic formulae from intersection theory.
 Let $X := \mathcal V (f_1, \dotsc, f_k) \subset \mathbb P^{n+k}$ be a complete 
intersection with inclusion
   $i \colon X \to \mathbb P^{n+k}$.
  Define $\alpha := i^* \left( \ch_1 \left( \mathcal O_{\mathbb
    P^{n+k}} (1) \right) \right) \in H^2 \left( X \right)$.  Then
  \begin{align}
    \int_X \alpha^{\wedge n}
    =
    d,
    \label{eq:d}
  \end{align}
  where $d = \prod_1^k d_i$ and  $d_i = \deg f_i$.
 Moreover,
  \begin{align}
    \ch \left( X \right)
    =
    \frac{(1+\alpha)^{n+k+1}}{\prod_i (1+d_i \alpha)}
    =
    1 + \ch_1 \left( X \right) + \dotsb + \ch_n \left( X \right),
    \label{ch}
  \end{align}
  and the Euler characteristic is given by
  \begin{align}
    \chi \left( X \right)
    =
    \int_X \ch_n \left( X \right),
    \label{eq:chi}
  \end{align}
  where $\ch_i \left( X \right) \in H^{2i} \left( X \right)$ is the $i$-th
 Chern class.

\begin{example}
We first illustrate the formula with two elementary cases.

For a conic $C$  in $\mathbb P^2$, expression \ref{ch} 
produces  $(1+\alpha)^3/(1+2\alpha)$, whose expansion at zero is 
$1+\alpha +\alpha^2+o(\alpha^3)$.
Here, $\int \alpha = 2$ 
and we get $\chi(C) =2$, which  was to be expected since the conic is topologically
isomorphic to $\mathbb P^1$.

For the quartic $Q$ in $\mathbb P^3$, expression \ref{ch} gives
 $ (1+\alpha)^4/(1+4\alpha)$, whose 
expansion at zero is
$1+6\alpha^2+o(\alpha^3)$. 
Here, $\int \alpha^2 = 4$ 
and so $\chi(Q)= 6 \times 4 = 24$,
which was to be expected since the quartic is a $K3$ surface,
whose Hodge diamond is well known to be 
\begin{displaymath}
 \begin{array}{ccccc}
 & & 1 & & \\
 & 0 & & 0 & \\
 1 & & 20 & & 1 \\
 & 0 & & 0 & \\
 & & 1 & &
 \end{array}\text{.}
 \end{displaymath}
\end{example}

Now let us return to our  two projectivisations $\overline X_0^I$  and $\overline X_0^J$.
For the ideal $I_{\hom}$ we have degrees $d_1=d_2=3$ and $d_3=1$.
The orbit was embedded in $\mathbb P^8$.
So  expression \ref{ch} gives
$$c\left(\overline X_0^I \right) = \frac{(1 + \alpha )^9}{(1+3 \alpha)(1+3\alpha)(1+\alpha)}= 
 \frac{(1+\alpha)^8 }{(1+3 \alpha)^2}.
$$
The Taylor series expansion around zero is given by
 $1+2\alpha+7\alpha^2-4\alpha^3+31\alpha^4-94\alpha^5+o(\alpha^6)$.
Here $\int \alpha^5 = 9$ and we get  the expected Euler characteristic to be
$$\chi \left( \overline X_0^I \right) = -94 \times 9 = -846.$$

On the other hand, for the ideal $J_{\hom}$ we have degrees $d_1=2$, $d_2=3$,
and $d_3=1$.
Expression \ref{ch} gives 
$$c\left(\overline X_0^J\right) = \frac{(1 + \alpha )^9}{(1+2 \alpha)(1+3\alpha)(1+\alpha)}= \frac{ (1+\alpha)^8}{(1+3\alpha)(1+2\alpha)}.$$
The Taylor series expansion around zero is 
$1+3\alpha+7\alpha^2+3\alpha^3+13\alpha^4-27\alpha^5+o(\alpha^6)$.
In this case, $\int \alpha^5 = 6$ and we obtain
$$\chi \left( \overline X_0^J \right) = -27 \times 6 = -162.$$
The difference between $\chi \left( \overline X_0^J \right)$ and $\chi \left(
\overline X_0^I \right)$ is a concrete topological difference between
our two compactifications.

%
%
%
%
%
%

\subsection{A fibration with 6 critical values}
In $\mathfrak{sl}\left( 3,\mathbb{C}\right) $ we now take 
\[
H_{0}=\left( 
\begin{array}{ccc}
3 & 0 & 0 \\
0 & -1 & 0 \\ 
0 & 0 &  -2%
\end{array}%
\right) ,
\]%
which is regular since it has 3 distinct eigenvalues.
Then $\mathcal{O}\left( H_{0}\right) $ is the set of 
matrices in  $\mathfrak{sl}\left( 3,\mathbb{C}\right) $ with eigenvalues $3,-1,-2$. 
We  choose 
\[
H=\left( 
\begin{array}{ccc}
1 & 0 & 0 \\
0 & -1 & 0 \\ 
0 & 0 &  0%
\end{array}%
\right) ,
\]%
giving the potential $f_H(A) = x_1-x_2, $  with critical values $\pm 1, \pm 4, \pm 5$.
This fibration is only mildly different from the previous one by the fact that 2 singular fibres contain 2 singularities each. 
The orbit is diffeomorphic to the 
one of subsection \ref{reg1}. The regular fibres are pairwise diffeomorphic.

As in \ref{reg1}, let 
$A \in \mathfrak{sl}(3,\mathbb C)$, 
and $p= \det(A+ \id)$, $q=\det(A+2\id)$. Once again, the ideals $\langle p,q\rangle $ and $\langle p-q,q\rangle $ are clearly 
equal and either of them defines the orbit though $H_0$ as an affine variety in $ \mathfrak{sl}\left( 3,\mathbb{C}\right) $.
The matrix $A$  belongs to the regular  fibre $X_0$ if in addition it satisfies 
$f_H=x_1-x_2=0$. 
Now, let 
$$I = \langle p,q,f_H\rangle \qquad J=\langle p,p-q,f_H\rangle $$ 
be two  equal ideals cutting out the regular fibre $X_0$ through $0$ and let $I_{\hom}$ and $J_{\hom}$ be the respective homogenisations. 
However, $I_{\hom}\neq J_{\hom}$, so they define distinct projective varieties.
Performing the necessary computations, we obtain the same cohomological diamonds, and the  same Euler characteristics as for the corresponding 
varieties of \ref{reg1}.
\vspace{3mm}

We then went further to check for the appearances of $16$'s and $1$'s in the  diamonds of 
the singular fibres at $1$ and indeed, they reappeared.
\begin{figure}[htp]
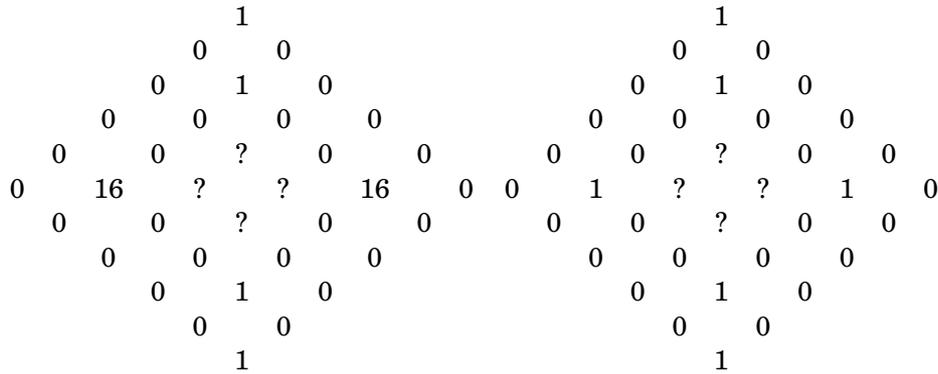

  \begin{subfigure}[b]{0.45\textwidth}
    $\begin{array}{ccccccccccc}
      &&&&& 1    \cr
      &&&& 0 && 0    \cr
      &&& 0 && 1 && 0    \cr
      && 0 && 0 && 0 && 0    \cr
      & 0 && 0 && ? && 0 && 0    \cr
       0 && 16 && ? && ? && 16 && 0   \cr
      & 0 && 0 && ? && 0 && 0    \cr
      && 0 && 0 && 0 && 0   \cr
      &&& 0 && 1 && 0    \cr
      &&&& 0 && 0    \cr
      &&&&& 1    \cr
    \end{array}$
    \label{fig:321us}
  \end{subfigure}
  \qquad
  \begin{subfigure}[b]{0.45\textwidth}
    $\begin{array}{ccccccccccc}
      &&&&& 1 &&&&&    \\
      &&&& 0 && 0 &&&&    \\
      &&& 0 && 1 && 0 &&&    \\
      && 0 && 0 && 0 && 0 &&    \\
      & 0 && 0 && ? && 0 && 0 &    \\
       0 && 1 && ? && ? && 1 && 0    \\
      & 0 && 0 && ? && 0 && 0 &    \\
      && 0 && 0 && 0 && 0 &&    \\
      &&& 0 && 1 && 0 &&&    \\
      &&&& 0 && 0 &&&&    \\
      &&&&& 1 &&&&&    
    \end{array}$
    \label{fig:321ss}
  \end{subfigure}
  \caption{The  diamonds of two projectivisations of the singular fibre
  over $1$  corresponding to $H_0=\Diag(3,-2,-1)$, $H=\Diag(1,-1,0)$.}
  \label{fig:321scomparison}
\end{figure}

 \begin{remark} While we were making the  amendments  to an earlier version of this work.
 Katzarkov, Kontsevich, and Pantev posted \cite{KKP}, which gives 3 definitions of Hodge 
 numbers for Landau--Ginzburg models.  Understanding the relation between the diamonds we gave here and 
 those  Hodge numbers now provides and entirely new perspective for our work. 
 \end{remark}

 \section{Open questions}
 \label{sec:open}
We finish by posing  the following open questions. How many 
compactifications can be obtained via homogenisation?
Is there a preferred choice in the sense that it maintains the topology closest to the original 
variety?
Given two compactifications with distinct numerical invariants, do there exist 
compactifications realising the intermediate values of the invariants?


\begin{thebibliography}{WWW}

  \bibitem[C]{brianThesis}
    B. Callander,
    \emph{Lefschetz Fibrations},
    Master's Thesis,
    Universidade Estadual de Campinas (2013).
    
  \bibitem[CG]{CG} 
    B. Callander, E. Gasparim,
    {\em Hodge diamonds and adjoint orbits}, 
    arXiv:1311.1265.

  \bibitem[Cy]{Cy} 
    S. Cynk, 
    {\em Euler characteristic of a complete intersection},
    {Complex and differential geometry},
    {Springer Proc. Math.},
    ({\bf 8}),
    {pp.~99--114},
    {Springer, Heidelberg},
    {2011}.

  \bibitem[CR]{CR} 
    S. Cynk, S. Rams,
    {\em Invariants of hypersurfaces and logarithmic differential forms}.
    {Contributions to algebraic geometry},
    {EMS Ser. Congr. Rep.},
    {pp.~189--213},
    {Eur. Math. Soc., Z\"urich},
    {2012}.
 \bibitem[F1]{f1} T. Fujita, {\em On the hyperplane section principle of Lefschetz}, J. Math. Soc. Japan {\bf 32} (1980), n. 1, 153--169.

\bibitem[F2]{f2} T. Fujita, Classification theories of polarized varieties. Cambridge University Press, Cambridge, (1990).  

 
  \bibitem[FOOO]{fooo}
    K. Fukaya, Y. Oh, H. Ohta, K. Ono,
    \emph{Lagrangian intersection {F}loer theory: anomaly and obstruction. {P}art {I}},
    AMS/IP Studies in Advanced Mathematics,
    Vol.~46,
    American Mathematical Society, Providence, RI; International Press, Somerville, MA,
    2009.

  \bibitem[GGSM1]{GGSM1}
    E. Gasparim, L. Grama, L. A. B. San Martin,
    \emph{Lefschetz fibrations on adjoint orbits},
    to appear in Forum Math. .

  \bibitem[GGSM2]{GGSM2}
    E. Gasparim, L. Grama, L. A. B. San Martin,
    \emph{Adjoint orbits of semisimple Lie groups and  Lagrangian submanifolds}, to appear in the Proc. Edinburgh Math. Society.

\bibitem[KKP]{KKP}
L. Katzarkov, M. Kontsevich, T. Pantev, 
arXiv:1409.5996.

  \bibitem[M2]{M2}
    D.  Grayson, M. Stillman,
    \emph{Macaulay2}, 
    a software system for research in algebraic geometry,
    Available at http://www.math.uiuc.edu/Macaulay2/.

  \bibitem[Ko]{Ko}
    M. Kontsevich,
    \emph{Homological algebra of Mirror Symmetry},
    Proc. International Congress of Mathematicians (Zurich, 1994) Birkh\"auser, Basel (1995) 120--139.

  \bibitem[Se]{Se}
    P. Seidel, 
    \emph{Fukaya categories and Picard-Lefschetz theory},
    Zurich Lectures in Advanced Mathematics, European Math. Soc., Zurich (2008).


\bibitem[So]{s} A. J. Sommese, On manifolds that cannot be ample divisors, Math. Ann. 221 (1976), 55--72.

\bibitem[St]{105}
Steenbrink, J.,
\emph{Mixed Hodge structures and singularities: a survey}. G\'eom\'etrie alg\'ebrique et applications, III (La R\'abida, 1984), 99--123.

\bibitem[T]{T}
A. Trevisan,
\emph{Latiice polytopes and toric varieties}, Master's thesis, Leiden University (2007). 
\end{thebibliography}
\end{document}